\author{Shai Sarussi }
 \title{Quasi-valuations - topology and the weak approximation theorem}
 \date{}
\begin{document}

\newtheorem{thm}{Theorem}[section]
\newtheorem{cor}[thm]{Corollary}
\newtheorem{lem}[thm]{Lemma}
\newtheorem{prop}[thm]{Proposition}
\newtheorem{ax}{Axiom}

\theoremstyle{definition}
\newtheorem{defn}[thm]{Definition}

\theoremstyle{remark}
\newtheorem{rem}[thm]{Remark}
\newtheorem{ex}[thm]{Example}
\newtheorem*{notation}{Notation}

\newcommand{\qv}{{quasi-valuation\ }}


\maketitle

\begin{abstract} {Suppose $F$ is a field with a nontrivial valuation $v$
and valuation ring $O_{v}$, $E$ is a finite field extension and
$w$ is a quasi-valuation on $E$ extending $v$. We study
the topology induced by $w$. We prove that
the quasi-valuation ring determines the topology, independent of the choice of its
quasi-valuation. Moreover, we prove the weak approximation
theorem for quasi-valuations.

}\end{abstract} 

\section{Introduction}

\ \ Recall that a valuation on a field $F$ is a function $v : F
\rightarrow \Gamma \cup \{  \infty \}$, where $\Gamma$ is a
totally ordered abelian group and $v$ satisfies the following
conditions:

(A1) $v(x) = \infty$ iff $x=0$;

(A2) $v(xy) = v(x)+v(y)$ for all $x,y \in F$;

(A3) $v(x+y) \geq \min \{ v(x),v(y) \}$ for all $x,y \in F$.

There has been considerable interest in recent years in generalizations of valuations, in order to treat rings that are not integral domains, and also to handle several valuations simultaneously. For example, pseudo-valuations
({\it see} [Co],[Hu], and [MH]), Manis-valuations and PM-valuations ({\it see}
[KZ]), value functions ({\it see} [Mor]), and gauges ({\it see} [TW]). These related theories are
discussed briefly in the introduction of [Sa].

In this paper we continue our study from [Sa] of quasi-valuations.
Recall that a {\it quasi-valuation} on a ring $A$ is a function $w
: A \rightarrow ~M \cup \{  \infty \}$, where $M$ is a totally
ordered abelian monoid, to which we adjoin an element $\infty$
greater than all elements of $M$, and $w$ satisfies the following
properties:

(B1) $w(0) = \infty$;

(B2) $w(xy) \geq w(x) + w(y)$ for all $x,y \in A$;

(B3) $w(x+y) \geq \min \{ w(x), w(y)\}$ for all $x,y \in A$.

The minimum of a finite number of valuations with the same value
group is a quasi-valuation. For example, the $n$-adic
quasi-valuation on $\Bbb Q$ (for any positive $n \in \Bbb Z$)
already has been studied in [Ste]. (Stein calls it the $n$-adic
valuation.) It is defined as follows: for any $0 \neq \frac{c}{d}
\in \Bbb Q$ there exists a unique $e \in \Bbb Z$ and integers $a,b
\in \Bbb Z$, with $b$ positive, such that
$\frac{c}{d}=n^{e}\frac{a}{b}$ with $n \nmid a$, $(n,b)=1$ and
$(a,b)=1$. Define $w_{n}(\frac{c}{d})=e$ and $w_{n}(0)=\infty$.

In [Sa] we develop the theory of quasi-valuations on finite dimensional field extensions that extend a
given valuation. For the reader's convenience we briefly overview some of the results from [Sa].
Let $F$ be a field with valuation $v$
and valuation ring $O_{v}$, let $E$ be a finite field extension and let
$w$ be a quasi-valuation on $E$ extending $v$ with a corresponding quasi-valuation ring $O_{w}$.
We prove that $O_{w}$ satisfies INC (incomparability), LO (lying over),
and GD (going down) over $O_{v}$; in particular, $O_{w}$ and $O_{v}$ have the
same Krull Dimension. We also prove that every such
quasi-valuation is dominated by some valuation extending $v$. Namely, there exists a valuation $u$
extending $v$ on $E$ so that $\forall x \in E$,
$w(x) \leq u(x)$.

Under the assumption that the value monoid of the quasi-valuation
is a group we prove that $O_{w}$ satisfies GU (going
up) over $O_{v}$, and a bound on the size of the prime spectrum is
given. In addition, a 1:1 correspondence is obtained between
exponential quasi-valuations and integrally closed quasi-valuation
rings.

Given $R$, an algebra over $O_{v}$, we construct a quasi-valuation
on $R$; we also construct a quasi-valuation on $R \otimes _{O_{v}}
F$, which helps us prove our main Theorem. The main Theorem states
that if $R \subseteq E$ satisfies $R \cap F=O_{v}$ and $E$ is the
field of fractions of $R$, then $R$ and $v$ induce a
quasi-valuation $w$ on $E$ such that $R=O_{w}$ and $w$ extends
$v$; thus $R$ satisfies the properties of a quasi-valuation ring.

In this paper, we extend some fundamental results from valuation theory. For example, we prove that
the topology induced by a quasi-valuation is Hausdorff and
totally disconnected. We also prove a weak version of the approximation theorem for quasi-valuations.











\section{The topology induced by a quasi-valuation}

In this section we introduce the topology induced by a
quasi-valuation. We show that this topology is close to the
topology induced by a valuation in the sense that they share
some basic topological properties such as being both Hausdorff and
totally disconnected. In the main theorem of this section we prove that the
topology induced by the quasi-valuation is determined by the corresponding quasi-valuation ring.

In this section $F$ denotes a field with a nontrivial valuation
$v$, a value group $\Gamma$, and a valuation ring $O_{v}$.
$E$ denotes a finite dimensional field extension with $n=[E:F]$,
$w:E \rightarrow M \cup \{ \infty \}$ is a quasi-valuation on $E$
with quasi-valuation ring $O_{w}$ (namely, $O_{w}=\{x\in E \ | \ w(x)\geq0\}$),
such that $w|_{F}=v$, and $M$ is a totally ordered abelian
monoid containing $\Gamma$.

We note that w(-1)=0 because $w$ extends $v$. Thus, by [Sa, Lemma 1.3], we have $w(x)=w(-x)$ for all $x \in E$.
Moreover, by [Sa, Lemma 1.4], for all $x,y \in E$ such that $w(x) \neq w(y)$, we have $w(x+y)= min \{w(x),w(y)\}$. Recall from [Sa, Definition 1.5] that an element $c \in E$ is called stable
with respect to $w$ if $w(cx)=w(c)+w(x)$ for every $x \in E$. Thus, by [Sa, Lemma 1.6], every $a\in F$ is stable with respect
to $w$.
We shall freely use these facts throughout the paper.

Let $x \in E$ and $m \in M$; we denote $$U_{m}^{w}(x)=\{ y \in E \mid
w(y-x)>m \};$$
we suppress $w$ when
it is understood. 

\begin{rem} \label{x in umx} Let $x \in E$ and $m \in M$; then $x \in U_{m}(x)$. Indeed,
$$w(x-x)=w(0)=\infty>m.$$ \end{rem}

We shall repeatedly use Remark \ref{x in umx} without reference.


\begin{lem} If $y \in U_{m_{1}}(x_{1}) \cap
U_{m_{2}}(x_{2})$ and $m_{1} \leq m_{2}$, then $$
y \in U_{m_{2}}(y) \subseteq U_{m_{1}}(x_{1}) \cap
U_{m_{2}}(x_{2}).$$\end{lem}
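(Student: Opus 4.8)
The plan is to show the two containments $y \in U_{m_2}(y)$ and $U_{m_2}(y) \subseteq U_{m_1}(x_1) \cap U_{m_2}(x_2)$ separately, the first being immediate from Remark \ref{x in umx}. For the inclusion, I would take an arbitrary $z \in U_{m_2}(y)$, so $w(z - y) > m_2$, and aim to bound $w(z - x_1)$ and $w(z - x_2)$ from below.

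For the $U_{m_2}(x_2)$ part: write $z - x_2 = (z - y) + (y - x_2)$. By hypothesis $y \in U_{m_2}(x_2)$, so $w(y - x_2) > m_2$; combined with $w(z - y) > m_2$ and axiom (B3), we get $w(z - x_2) \geq \min\{w(z-y), w(y-x_2)\} > m_2$, hence $z \in U_{m_2}(x_2)$. For the $U_{m_1}(x_1)$ part, similarly write $z - x_1 = (z - y) + (y - x_1)$. Here $y \in U_{m_1}(x_1)$ gives $w(y - x_1) > m_1$, and $w(z - y) > m_2 \geq m_1$ gives $w(z-y) > m_1$; again by (B3), $w(z - x_1) \geq \min\{w(z-y), w(y-x_1)\} > m_1$, so $z \in U_{m_1}(x_1)$.

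The only genuine subtlety — and the single place where the hypothesis $m_1 \leq m_2$ is actually used — is in upgrading $w(z-y) > m_2$ to $w(z-y) > m_1$ so that it can be fed into the estimate for $z - x_1$; this is a trivial transitivity-of-order step, so there is no real obstacle here. I should also be mildly careful that I am using $w(y - x) = w(x - y)$ implicitly (from $w(x) = w(-x)$, noted in the excerpt) when rearranging differences, though in fact the computations above only ever add the stated differences and never need to negate them, so even that is unnecessary. I would write the argument as two short displayed chains of inequalities, one for each target set, and conclude.
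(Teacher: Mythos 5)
Your argument is correct and matches the paper's proof essentially verbatim: both decompose $z-x_i=(z-y)+(y-x_i)$, apply (B3), and use $m_1\leq m_2$ only to pass from $w(z-y)>m_2$ to $w(z-y)>m_1$. Nothing further is needed.
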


\begin{proof} Since $y \in U_{m_{1}}(x_{1}) \cap U_{m_{2}}(x_{2})$,
we have $w(y-x_{1}) > m_{1}$ and \linebreak $w(y-x_{2}) > m_{2}
\geq m_{1}$. Let $z \in U_{m_{2}}(y)$; then $w(z-y) > m_{2}$. Thus,
$$w(z-x_{1}) = w(z-y+y-x_{1})$$ $$ \geq min \{ w(z-y),
w(y-x_{1})\} > m_{1}$$ and $$w(z-x_{2}) = w(z-y+y-x_{2})$$ $$ \geq
min \{ w(z-y), w(y-x_{2})\} > m_{2}.$$ Thus $z \in U_{m_{1}}(x_{1}) \cap
U_{m_{2}}(x_{2})$

\end{proof}

We denote $B=\{ U_{m}(x) \mid x \in E$, \ $m \in M\}$.

\begin{cor} The set $B$ is a base for a topology on $E$.

\end{cor}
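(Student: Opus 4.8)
The plan is to verify the two standard axioms for a collection of subsets to be a base for a topology on $E$: first, that the members of $B$ cover $E$; second, that whenever a point lies in the intersection of two members of $B$, some member of $B$ contains that point and is contained in the intersection.

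For the covering property, I would argue as follows. Since $M$ contains $\Gamma$ and $v$ is nontrivial, $M$ is nonempty, so we may fix some $m \in M$. Then for every $x \in E$, Remark \ref{x in umx} gives $x \in U_{m}(x) \in B$. Hence $\bigcup B = E$.

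For the second axiom, the work has essentially been done in the preceding lemma. Given $U_{m_{1}}(x_{1}), U_{m_{2}}(x_{2}) \in B$ and a point $y \in U_{m_{1}}(x_{1}) \cap U_{m_{2}}(x_{2})$, I would use that $M$ is totally ordered to assume, without loss of generality, that $m_{1} \leq m_{2}$ (otherwise interchange the two sets). The lemma then yields $y \in U_{m_{2}}(y) \subseteq U_{m_{1}}(x_{1}) \cap U_{m_{2}}(x_{2})$, and $U_{m_{2}}(y) \in B$, which is exactly what is required.

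Since both axioms hold, $B$ is a base for a topology on $E$. There is no real obstacle here; the only point requiring a moment's care is the appeal to the total ordering of $M$ to reduce to the case $m_{1} \leq m_{2}$ so that the lemma applies, and the (trivial) observation that $M \neq \varnothing$ so that $B$ genuinely covers $E$.
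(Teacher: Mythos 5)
Your proof is correct and follows exactly the route the paper intends: the corollary is stated as an immediate consequence of the preceding lemma (applied after using the total order on $M$ to assume $m_{1} \leq m_{2}$) together with Remark \ref{x in umx}, which is precisely your verification of the two base axioms.
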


In view of Corollary 2.3 we define,

\begin{defn} The topology whose base is $B$ will be denoted by $T_{w}$. We call $T_{w}$ the topology induced by the quasi-valuation $w$.\end{defn}

We recall the following lemma from [Sa, Lemma 2.8]:

\begin{lem} \label{wx alpha} Let $E/F$ be a finite field extension and
let $w$ be a quasi-valuation on $E$ extending a valuation $v$ on
$F$. Then $w(x) \neq \infty$ for all $0\neq x \in E$. In fact, for all $0\neq x \in E$, there exists
$\alpha \in \Gamma$ such that $w(x) < \alpha$. \end{lem}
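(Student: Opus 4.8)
The plan is to use the only structural fact available about $E$, namely that $E/F$ is finite: every nonzero $x\in E$ is algebraic over $F$, and the resulting polynomial relation, fed through (B2) and (B3) together with $w|_{F}=v$, forces $w(x)$ to stay below $\Gamma$. Fix $0\neq x\in E$ and let
$$p(t)=t^{k}+a_{k-1}t^{k-1}+\cdots+a_{1}t+a_{0},\qquad a_{i}\in F,$$
be the minimal polynomial of $x$ over $F$; since $x\neq 0$, minimality forces $a_{0}\neq 0$ (else $t\mid p(t)$, contradicting minimality as $x\neq 0$). From $p(x)=0$ we get
$$-a_{0}=x^{k}+a_{k-1}x^{k-1}+\cdots+a_{1}x,$$
and, since $w(-a_{0})=w(a_{0})=v(a_{0})\in\Gamma$ (here $a_{0}\neq 0$, so $v(a_{0})\neq\infty$), this identity is what I would use to pin down $w(x)$.

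To prove the stronger ``in fact'' assertion it is enough to rule out the possibility that $w(x)\geq\gamma$ for \emph{every} $\gamma\in\Gamma$; this is exactly the negation of the claim and it subsumes the case $w(x)=\infty$. So assume it, for contradiction. Since $0\in\Gamma$ we have $w(x)\geq 0$, so repeated use of (B2) gives $w(x^{j})\geq j\,w(x)\geq w(x)$ for every $j\geq 1$, and a further application of (B2) gives $w(a_{j}x^{j})\geq v(a_{j})+w(x^{j})\geq v(a_{j})+w(x)$ whenever $a_{j}\neq 0$ (the terms with $a_{j}=0$ equal $\infty$ and are harmless); here $a_{k}:=1$. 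Because $\Gamma$ is a nontrivial group it has no largest element and is closed under subtraction, so, picking $\mu\in\Gamma$ strictly above all of the finitely many elements $v(a_{0})-v(a_{j})$, the hypothesis $w(x)\geq\mu$ yields $w(a_{j}x^{j})\geq v(a_{j})+\mu>v(a_{0})$ for every relevant $j$. Then (B3) applied to the displayed sum gives
$$v(a_{0})=w(-a_{0})\geq\min_{1\leq j\leq k}w(a_{j}x^{j})>v(a_{0}),$$
a contradiction. Hence some $\alpha\in\Gamma$ satisfies $w(x)<\alpha$, and in particular $w(x)\neq\infty$.

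The substantive point, and the only place care is needed, is the bookkeeping inside the ordered monoid $M\supseteq\Gamma$: one must check that ``lying strictly above all of $\Gamma$'' is preserved under adding a fixed element of $\Gamma$ and under $j$-fold sums, which is exactly where the group structure of $\Gamma$ (no top element, closure under subtraction) and the nontriviality of $v$ enter, and one must see that the argument runs verbatim when $w(x)=\infty$, since then every $w(a_{j}x^{j})$ is $\infty$ as well. No division in $\Gamma$ is involved, so divisibility of the value group is irrelevant. A marginally more explicit variant avoids the contradiction: from the displayed inequality choose an index $j_{0}$ with $w(a_{j_{0}}x^{j_{0}})\leq v(a_{0})$, deduce $j_{0}\,w(x)\leq v(a_{0})-v(a_{j_{0}})\in\Gamma$, and finish by separating the cases $w(x)<0$ (take $\alpha=0$) and $w(x)\geq 0$ (then $w(x)\leq j_{0}\,w(x)$ is bounded above in $\Gamma$); I would nevertheless present the contradiction form, which is cleaner.
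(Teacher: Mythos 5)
Your argument is correct. Note that the paper itself gives no proof of this statement---it is recalled verbatim from [Sa, Lemma 2.8]---so there is no in-paper proof to compare against; your minimal-polynomial argument is the natural self-contained one and is complete. Two points are worth making explicit: the choice of $\mu$ uses that $\Gamma$ has no largest element, i.e.\ that $v$ is nontrivial, which is a standing assumption of Section 2 rather than part of the lemma's wording (and it is genuinely needed, since for trivial $v$ the ``in fact'' clause already fails at $x=1$); and the steps $j\,w(x)\geq w(x)$ and $v(a_j)+w(x)\geq v(a_j)+\mu$ rest on translation-invariance of the order on $M$, which is part of the definition of a totally ordered abelian monoid, so they are legitimate. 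Incidentally, the weaker claim $w(x)\neq\infty$ also has a one-line proof from invertibility, namely $0=w(1)\geq w(x)+w(x^{-1})$, but deducing it from the stronger clause as you do is equally good.
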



We denote
$$M^{G}=\{ m \in M \mid \text{\ there exists\ } 0 \neq y\in E
\text{\ such that\ } m \leq w(y)  \}.$$

\begin{rem} \label{MG is a submonoid and contains Gamma} $M^{G}$ is a submonoid of $M$ containing $\Gamma$.

\end{rem}

\begin{proof}
Let $m_1,m_2 \in M^{G}$;
then there exist nonzero $y_1, y_2 \in E$ such that $m_1 \leq w(y_1)$ and $m_2 \leq w(y_2)$. Thus,
$$m_1 +m_2 \leq w(y_1)+w(y_2) \leq w(y_1y_2).$$ Note that $y_1y_2 \neq 0$ and thus $m_1 +m_2 \in M^{G}$. It is easy to see that $\Gamma \subseteq M^{G}$; indeed, for every $\alpha \in \Gamma$ there exists a nonzero $a \in F$ such that $w(a)=v(a)=\alpha$.

\end{proof}

\begin{rem} \label{MG does not have a maximal element} $M^{G}$ does not have a maximal element; in fact, for all $m \in M^G$ there exists
 $\alpha \in \Gamma \cap  M^G$ such that $m < \alpha$. \end{rem}

\begin{proof} Let $m \in M^G$.
Then there exists $0 \neq y \in E$ such that $m \leq w(y)$. By Lemma \ref{wx alpha} there exists
$\alpha \in \Gamma$ such that $w(y) < \alpha$. So, $m < \alpha $. By Remark \ref{MG is a submonoid and contains Gamma}, $\alpha \in \Gamma \subseteq M^G$.
\end{proof}


\begin{prop} \label{T_w is discrete} $T_{w}$ is discrete iff there exists an
element $m \in M \setminus M^{G}$.
\end{prop}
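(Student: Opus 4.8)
\emph{Overall strategy.} The plan is to first reduce discreteness of $T_w$ to a purely order-theoretic statement about the value monoid $M$ and the values $w(z)$ of nonzero elements, and then to match that statement with the existence of an element of $M\setminus M^G$. The key external inputs will be Lemma~\ref{wx alpha}, Remark~\ref{MG is a submonoid and contains Gamma}, and, crucially, Remark~\ref{MG does not have a maximal element}.

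\emph{Reduction.} Since $w(y-x)>m$ depends only on $y-x$, the basic open sets satisfy $U_m(x)=x+U_m(0)$, so $T_w$ is discrete iff $\{0\}$ is open, iff $U_m(0)=\{0\}$ for some $m\in M$; to pass from ``$\{0\}$ contains a basic open neighbourhood of $0$'' to ``$U_m(0)=\{0\}$'' I would invoke Lemma~2.2 (or simply note that a nonempty basic set contained in $\{0\}$ contains its own centre by Remark~\ref{x in umx}, hence equals $\{0\}$). Unwinding the definition of $U_m$ and using that $M$ is totally ordered, $U_m(0)=\{0\}$ says precisely that $w(z)\le m$ for every nonzero $z\in E$. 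So the proposition reduces to the equivalence: there exists $m\in M$ with $w(z)\le m$ for all nonzero $z\in E$ $\iff$ $M\setminus M^G\neq\emptyset$.

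\emph{The two implications.} For $(\Leftarrow)$: if $m\in M\setminus M^G$, then by the definition of $M^G$ and totality of the order, $w(z)<m$ for every nonzero $z\in E$; hence $U_m(x)=\{x\}$ for every $x\in E$, so $T_w$ is discrete. For $(\Rightarrow)$: by the reduction, discreteness gives an $m\in M$ with $w(z)\le m$ for all nonzero $z\in E$, and I claim $m\notin M^G$. Indeed, if $m\in M^G$ then Remark~\ref{MG does not have a maximal element} produces $\alpha\in\Gamma$ with $m<\alpha$, and $\alpha=v(a)=w(a)$ for some nonzero $a\in F\subseteq E$, contradicting $w(a)\le m$. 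Hence $m\in M\setminus M^G$.

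\emph{Main obstacle.} The only step needing genuine care is the asymmetry between the \emph{weak} inequality ``$w(z)\le m$ for all nonzero $z$'', which is what discreteness yields, and the \emph{strict} inequality ``$w(z)<m$ for all nonzero $z$'', which is what membership in $M\setminus M^G$ encodes. Closing this gap is exactly the purpose of Remark~\ref{MG does not have a maximal element}: because $M^G$ has no maximal element and its $\Gamma$-values are realised by elements of $F$, the threshold $m$ coming from discreteness cannot itself lie in $M^G$. Everything else is routine manipulation of the totally ordered monoid and of the translation-invariant base $B$.
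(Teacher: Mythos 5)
Your proof is correct, and the $(\Leftarrow)$ direction coincides with the paper's: $m \notin M^{G}$ forces $w(y-x)<m$ for every $y \neq x$, so $U_{m}(x)=\{x\}$. Where you differ is in $(\Rightarrow)$: you argue directly, using translation invariance ($U_{m}(x)=x+U_{m}(0)$) together with Remark \ref{x in umx} to convert discreteness into a single threshold $m \in M$ with $w(z) \leq m$ for all nonzero $z$, and then you rule out $m \in M^{G}$ by Remark \ref{MG does not have a maximal element}, which supplies $\alpha \in \Gamma$ with $m<\alpha$ realized as $w(a)=v(a)=\alpha$ for some nonzero $a \in F$ --- a contradiction with $w(a)\leq m$. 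The paper instead proves the contrapositive: assuming $M=M^{G}$, it shows every basic set $U_{m}(x)$ is infinite by exhibiting the points $x+a^{n}$, $n \in \Bbb N$, with $w(a^{n})=n\alpha>m$, where $\alpha \in \Gamma$ comes from Lemma \ref{wx alpha} applied to a witness $z$ of $m \in M^{G}$. Both arguments hinge on the same underlying fact --- any element of $M^{G}$ is strictly exceeded by a $\Gamma$-value attained on $F$ --- and your handling of the weak-versus-strict inequality issue is exactly right. Your version is leaner, needing only one witness above the threshold and making the reduction to the origin explicit; the paper's contrapuntal construction buys a marginally stronger conclusion, namely that when $M=M^{G}$ every nonempty open set is infinite rather than merely non-singleton.
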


\begin{proof} ($\Leftarrow$) Let $m \in M \setminus M^{G}$ and let $x \in E$. Then for
every $y \neq x$ we
have $w(y-x)<m$; thus $U_{m}(x)=\{ x \}$.


($\Rightarrow$) We assume $M=M^{G}$ and we show that $T_{w}$ is
not discrete. It is enough to show that every open set has
infinitely many elements. Now, since every open set contains some
$U_{m}(x)$ (for $m \in M$, $x \in E$), it is enough to show that
every $U_{m}(x)$ has infinitely many elements. By our assumption
$M=M^{G}$ and thus for every $m \in M$ there exists $0 \neq z
\in E$ such that $m \leq w(z)$; also, by Lemma \ref{wx alpha}, for every such $z$ there
exists $0 < \alpha \in \Gamma$ such that $w(z) < \alpha $. Take
$a \in O_{v}$ with $v(a)=\alpha$. Then $x+a^{n} \in U_{m}(x)$ for
each ~$ n \in \Bbb N$, proving the set $U_{m}(x)$ has infinitely
many elements.



\end{proof}

In view of Proposition  \ref{T_w is discrete}, we restrict our discussion to $M^{G}$;
namely we denote $B=\{ U_{m}(x) \mid x\in E,\ m \in M^{G} \}$ as
a base for $T_{w}$.


\begin{lem} \label{w y-x >m} Let $x,y,z \in E$ and $m \in M^{G}$. If $z \in U_{m}(x) \cap
U_{m}(y)$ then $w(y-x) >m$.\end{lem}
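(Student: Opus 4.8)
The plan is to use the triangle inequality for the quasi-valuation $w$, exactly as in the proof of Lemma 2.2, together with the fact that $w(y-x) = w(x-y)$. The statement says: if $z$ lies in both $U_m(x)$ and $U_m(y)$, then $w(y-x) > m$.

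First I would unpack the hypothesis. Since $z \in U_m(x) \cap U_m(y)$, by definition we have $w(z-x) > m$ and $w(z-y) > m$. Next I would write $y - x = (y - z) + (z - x)$ and apply (B3) to get
$$w(y-x) = w\bigl((y-z)+(z-x)\bigr) \geq \min\{w(y-z), w(z-x)\}.$$
Now $w(y-z) = w(z-y) > m$ (using $w(t) = w(-t)$, which holds since $w$ extends $v$), and $w(z-x) > m$, so the minimum on the right is $> m$, giving $w(y-x) > m$ as desired. That is essentially the whole argument.

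**The only point requiring a word of care** is making sure that $w(y-z) = w(z-y)$; this is the fact $w(x) = w(-x)$ recorded in the preamble (from [Sa, Lemma 1.3], valid because $w(-1) = 0$), which the paper says we may use freely. There is no real obstacle here — the lemma is a direct consequence of the ultrametric inequality (B3), and in fact it does not even use that $m \in M^G$; the restriction to $M^G$ is just the standing convention for the base $B$ adopted after Proposition 2.9. I would present it as a three-line computation mirroring the displayed inequalities in the proof of Lemma 2.2.
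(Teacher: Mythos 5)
Your argument is correct and is essentially the paper's own proof: the paper also writes $y-x=(y-z)+(z-x)$ and applies (B3), with the sign symmetry $w(z-y)=w(y-z)$ used implicitly as permitted by the preamble. Your observation that $m\in M^{G}$ plays no role is accurate; the restriction is only the standing convention for the base.
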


\begin{proof} By definition, $z \in U_{m}(x) \cap
U_{m}(y)$ implies $w(z-x)>m$ and $w(y-z)>m$. Thus,
$$w(y-x) \geq \min \{w(y-z),w(z-x) \}>m.$$

\end{proof}

\begin{prop} $T_{w}$ is Hausdorff.\end{prop}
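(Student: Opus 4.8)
The plan is to show that any two distinct points $x, y \in E$ can be separated by basic open sets from $B$. Given $x \neq y$, the key quantity is $w(y-x)$; since $y - x \neq 0$, Lemma \ref{wx alpha} guarantees $w(y-x) \neq \infty$, and in fact there exists $\alpha \in \Gamma$ with $w(y-x) < \alpha$. I would then pick $m \in M^{G}$ with $m \geq w(y-x)$ — for instance, $m = \alpha \in \Gamma \subseteq M^{G}$ works by Remark \ref{MG is a submonoid and contains Gamma} — and consider the basic neighborhoods $U_{m}(x)$ and $U_{m}(y)$.

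The claim is that $U_{m}(x) \cap U_{m}(y) = \emptyset$. This is precisely the contrapositive of Lemma \ref{w y-x > m}: if some $z$ were in both $U_{m}(x)$ and $U_{m}(y)$, then that lemma would force $w(y-x) > m$, contradicting our choice $m \geq w(y-x)$. Hence no such $z$ exists, the two neighborhoods are disjoint, and since $x \in U_{m}(x)$ and $y \in U_{m}(y)$ (by Remark \ref{x in umx}), we have separated $x$ and $y$. Since $x$ and $y$ were arbitrary distinct points, $T_{w}$ is Hausdorff.

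I do not expect any real obstacle here — the work has essentially been front-loaded into Lemma \ref{wx alpha} (which supplies an upper bound $\alpha \in \Gamma$ for $w(y-x)$, ensuring a separating index actually exists in $M^{G}$) and Lemma \ref{w y-x > m} (which does the ultrametric triangle-inequality computation). The only point requiring a moment's care is making sure the chosen $m$ lies in $M^{G}$ rather than merely in $M$, so that $U_{m}(x)$ and $U_{m}(y)$ are genuine members of the base $B$ as redefined after Proposition \ref{T_w is discrete}; taking $m = \alpha \in \Gamma$ handles this cleanly.
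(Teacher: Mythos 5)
Your argument is correct and is essentially the paper's proof: both separate $x$ and $y$ by $U_{m}(x)$ and $U_{m}(y)$ and conclude disjointness from Lemma \ref{w y-x >m}. The only (immaterial) difference is that the paper takes $m=w(y-x)$ itself, which already lies in $M^{G}$ since $y-x\neq 0$, whereas you take a strictly larger $\alpha\in\Gamma$; both choices work.
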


\begin{proof} Let $x, y \in E$ with $x \neq y$, and write
$w(y-x)=m \in M^{G}$. 
By Lemma \ref{w y-x >m} we have, $$U_{m}(x) \cap U_{m}(y) =
\emptyset .$$

\end{proof}

\begin{lem} \label{U_mx is clopen} Let $x \in E$ and $m \in M^{G}$. Then $U_{m}(x)$ is closed as well as
open.\end{lem}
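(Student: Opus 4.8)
The plan is to show the complement $E \setminus U_m(x)$ is open, which combined with Corollary 2.3 (each $U_m(x)$ is open) gives the result. So the goal is: for each $y \notin U_m(x)$, exhibit a basic open set $U_{m'}(y) \subseteq E \setminus U_m(x)$ for a suitable $m' \in M^G$.

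First I would fix $y \in E$ with $w(y-x) \leq m$ (i.e., $y \notin U_m(x)$) and take $m' = m$. I claim $U_m(y) \cap U_m(x) = \emptyset$. Indeed, if some $z$ lay in the intersection, then Lemma \ref{w y-x >m} would force $w(y-x) > m$, contradicting $y \notin U_m(x)$. Hence $U_m(y) \subseteq E \setminus U_m(x)$, and since $y \in U_m(y)$ by Remark \ref{x in umx}, every point of the complement is interior; thus the complement is open and $U_m(x)$ is closed. Since $U_m(x)$ is already a basic open set, it is clopen.

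The only point requiring care is that we must choose $m' \in M^G$ so that $U_{m'}(y)$ is a legitimate basic open set in our restricted base $B = \{U_m(x) \mid x \in E,\ m \in M^G\}$; but here $m' = m$ and $m \in M^G$ by hypothesis, so there is nothing extra to do. I do not anticipate a real obstacle: the argument is essentially a one-line application of Lemma \ref{w y-x >m}, mirroring the proof that $T_w$ is Hausdorff. The main thing to be attentive to is correctly negating "$y \in U_m(x)$" — it means $w(y - x) \leq m$ rather than $w(y-x) < m$ — but this does not affect the disjointness argument since Lemma \ref{w y-x >m} yields the strict inequality $w(y-x) > m$ for a contradiction.

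\begin{proof}
Since $U_{m}(x) \in B$, it is open. We show its complement is open.
Let $y \in E \setminus U_{m}(x)$, so that $w(y-x) \leq m$. We claim that
$$U_{m}(y) \cap U_{m}(x) = \emptyset.$$
Indeed, if $z \in U_{m}(y) \cap U_{m}(x)$, then by Lemma \ref{w y-x >m} we would have $w(y-x) > m$, contradicting $w(y-x) \leq m$. Hence $U_{m}(y) \subseteq E \setminus U_{m}(x)$, and since $m \in M^{G}$ and $y \in U_{m}(y)$, the point $y$ is interior to $E \setminus U_{m}(x)$. Therefore $E \setminus U_{m}(x)$ is open, so $U_{m}(x)$ is closed.
\end{proof}
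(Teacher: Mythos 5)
Your proof is correct and follows essentially the same route as the paper: for $y \notin U_m(x)$ you use Lemma \ref{w y-x >m} to get $U_m(y) \cap U_m(x) = \emptyset$, so $U_m(y)$ is an open neighborhood of $y$ inside the complement, making $U_m(x)$ closed. Nothing further is needed.
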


\begin{proof} Let $y \notin U_{m}(x)$; then $w(y-x) \leq m$.
By Lemma \ref{w y-x >m} we have, $$U_{m}(y)\cap U_{m}(x)= \emptyset;$$
obviously $y \in U_{m}(y)$.
So, $U_{m}(y)$ is an open set containing $y$ disjoint from $U_{m}(x)$.


\end{proof}

The following lemma shows that $E$ is totally disconnected, in the
following sense.

\begin{prop} The only nonempty connected subsets of $E$
are the singleton sets $\{ x \}$ for $x \in E$.\end{prop}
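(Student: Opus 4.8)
The plan is the standard argument showing that a space with a base of clopen sets is totally disconnected, specialized to our situation. First I would dispose of the trivial direction: a singleton $\{x\}$ is obviously connected, so it remains to show that any connected subset $S \subseteq E$ with $|S| \geq 2$ leads to a contradiction.

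So suppose $S \subseteq E$ is connected and pick two distinct points $x, y \in S$. Set $m = w(y-x)$; by Lemma \ref{wx alpha} (since $y - x \neq 0$) we have $m \in M^{G}$. Now consider the set $U_{m}(x)$. By Lemma \ref{U_mx is clopen}, $U_{m}(x)$ is both open and closed in $E$, hence $U_{m}(x) \cap S$ is a clopen subset of $S$ in the subspace topology. This set is nonempty because $x \in U_{m}(x)$ (Remark \ref{x in umx}), and it is a proper subset of $S$ because $w(y-x) = m \not> m$ forces $y \notin U_{m}(x)$. Therefore $S = \big(U_{m}(x) \cap S\big) \cup \big(S \setminus U_{m}(x)\big)$ expresses $S$ as a disjoint union of two nonempty sets that are both open (and closed) in $S$, contradicting the connectedness of $S$.

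Hence every connected subset of $E$ has at most one point, and together with the trivial direction this proves that the nonempty connected subsets are exactly the singletons $\{x\}$, $x \in E$. I do not expect any genuine obstacle here: the entire content has already been packaged into Lemma \ref{U_mx is clopen} (the existence of a base of clopen sets), so the proof is essentially the observation that a clopen neighborhood of $x$ not containing $y$ disconnects any set containing both. The only minor point to be careful about is invoking Lemma \ref{wx alpha} to guarantee $m \in M^{G}$, which is needed so that $U_{m}(x)$ is one of the basic (clopen) sets in our restricted base $B = \{ U_{m}(x) \mid x \in E,\ m \in M^{G}\}$.
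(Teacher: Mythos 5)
Your proof is correct and follows essentially the same route as the paper: pick distinct $x,y$ in $S$, set $m=w(y-x)$, and use the clopen set $U_{m}(x)$ (Lemma \ref{U_mx is clopen}) together with its complement to disconnect $S$, since $x\in U_{m}(x)$ while $w(y-x)=m\ngtr m$ puts $y$ outside. Your extra care in justifying $m\in M^{G}$ via Lemma \ref{wx alpha} is a nice touch the paper leaves implicit, but the argument is the same.
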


\begin{proof} Let $S \subseteq E$ be a nonempty set containing at
least two elements, $x \neq y$. Write $w(x-y)=m$ and $U_{1}=U_{m}(x)$. Let $U_{2}$ denote the complement of $U_{1}$ in $E$, which is open by
Lemma \ref{U_mx is clopen}. Note that $x \in U_{1}$ and $y \in U_{2}$ (since
$w(y-x)=m \ngtr m$), and thus by definition $S$ is disconnected.

\end{proof}

Let $x\in E$ and $m \in M^{G}$; we denote $\widetilde{U}_{m}(x)=\{ y \in E \mid w(y-x) \geq m \}$.
Obviously, $ U_{m}(x) \subseteq \widetilde{U}_{m}(x)$. Thus, as in Remark \ref{x in umx}, we have $x \in
\widetilde{U}_{m}(x)$. 

\begin{lem} \label{U_my subset of U_mx} Let $x,y \in E$ and $m \in M^G$.
If $y \in \widetilde{U}_{m}(x)$ then $\widetilde{U}_{m}(y) \subseteq \widetilde{U}_{m}(x).$
If $y \notin \widetilde{U}_{m}(x)$ then $ \widetilde{U}_{m}(y) \subseteq (\widetilde{U}_{m}(x))^{c}.$\end{lem}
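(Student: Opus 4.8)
The plan is to mimic the proof of Lemma \ref{U_mx is clopen}, but now with the closed balls $\widetilde{U}_{m}$ in place of the open balls $U_{m}$, and with the key algebraic input being the ``isosceles triangle'' strengthening of (B3): whenever $w(x)\neq w(y)$ one has $w(x+y)=\min\{w(x),w(y)\}$, which the excerpt records as freely usable. Observe first that $m\in M^{G}$ guarantees $\widetilde{U}_{m}(x)$ is a genuine neighbourhood notion; and note $y\in\widetilde{U}_{m}(x)$ iff $w(y-x)\geq m$ iff $x\in\widetilde{U}_{m}(y)$, since $w(y-x)=w(-(y-x))=w(x-y)$. This symmetry will let me deduce both containments from the same computation.

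For the first assertion, suppose $y\in\widetilde{U}_{m}(x)$, i.e.\ $w(y-x)\geq m$, and let $z\in\widetilde{U}_{m}(y)$, i.e.\ $w(z-y)\geq m$. Then
$$w(z-x)=w((z-y)+(y-x))\geq\min\{w(z-y),w(y-x)\}\geq m,$$
so $z\in\widetilde{U}_{m}(x)$; this is exactly the display I would write, and it gives $\widetilde{U}_{m}(y)\subseteq\widetilde{U}_{m}(x)$. For the second assertion, suppose $y\notin\widetilde{U}_{m}(x)$, so $w(y-x)<m$, and let $z\in\widetilde{U}_{m}(y)$, so $w(z-y)\geq m>w(y-x)$. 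Here the two values $w(z-y)$ and $w(y-x)$ are \emph{distinct} (one is $\geq m$, the other $<m$), so the strengthened inequality applies:
$$w(z-x)=w((z-y)+(y-x))=\min\{w(z-y),w(y-x)\}=w(y-x)<m,$$
whence $z\notin\widetilde{U}_{m}(x)$, i.e.\ $z\in(\widetilde{U}_{m}(x))^{c}$. This proves $\widetilde{U}_{m}(y)\subseteq(\widetilde{U}_{m}(x))^{c}$.

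The only genuine subtlety — and the one point I would be careful about — is making sure the second case really forces $w(z-y)\neq w(y-x)$ so that the equality version of (B3) is legitimately invoked; this is immediate from $w(z-y)\geq m$ and $w(y-x)<m$, but it is the hinge of the argument, since with only the $\min$-inequality one would merely get $w(z-x)\geq\min\{\cdot,\cdot\}$, which does not separate $z$ from $\widetilde{U}_{m}(x)$. No compactness, no appeal to the topology, and no use of $M^{G}$ beyond knowing the balls are the ones in the fixed base are needed; the whole proof is the two displayed three-term computations above. I would present it in a single short \texttt{proof} environment handling the two cases in turn.
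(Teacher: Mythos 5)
Your proof is correct and follows essentially the same route as the paper: the first containment via the ultrametric inequality $w(z-x)\geq\min\{w(z-y),w(y-x)\}\geq m$, and the second via the equality case $w(z-x)=\min\{w(z-y),w(y-x)\}=w(y-x)<m$, justified because $w(z-y)\geq m>w(y-x)$ forces the two values to differ. Your explicit remark on why the equality version of (B3) is legitimate is exactly the hinge the paper's argument relies on (via [Sa, Lemma 1.4]).
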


\begin{proof} Suppose $y \in \widetilde{U}_{m}(x)$; then $w(y-x) \geq m$. Let $z \in \widetilde{U}_{m}(y)$;
then $w(z-y) \geq m$. Hence, $w(z-x) \geq min \{w(z-y), w(y-x) \} \geq m$. Thus, $$
\widetilde{U}_{m}(y) \subseteq \widetilde{U}_{m}(x).$$

Suppose $y \notin \widetilde{U}_{m}(x)$; then $w(y-x) < m$.
Let $z \in \widetilde{U}_{m}(y)$;
then $w(z-y) \geq m$. Hence, $w(z-x) = min \{w(z-y), w(y-x) \} = w(y-x)<m$. Thus,
$$ \widetilde{U}_{m}(y) \subseteq (\widetilde{U}_{m}(x))^{c}.$$

\end{proof}

\begin{cor} \label{widetildeU_mx is clopen} $\widetilde{U}_{m}(x)$ is both open and
closed, for any $x \in E$ and $m \in M^{G}$.\end{cor}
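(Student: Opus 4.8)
The plan is to show that $\widetilde{U}_{m}(x)$ is open by exhibiting, around each of its points, a basic open set contained in it, and to show it is closed by exhibiting, around each point of its complement, a basic open set disjoint from it. Both halves follow almost immediately from Lemma \ref{U_my subset of U_mx} once we observe that every $\widetilde{U}_{m}(x)$ contains an honest basic open neighbourhood of each of its points, namely a set of the form $U_{m'}(y)$ with $m' \in M^{G}$.

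For openness, let $y \in \widetilde{U}_{m}(x)$. By Lemma \ref{U_my subset of U_mx} we have $\widetilde{U}_{m}(y) \subseteq \widetilde{U}_{m}(x)$, so it suffices to find a basic open set $U_{m'}(y)$ with $y \in U_{m'}(y) \subseteq \widetilde{U}_{m}(y)$. Here I would invoke Remark \ref{MG does not have a maximal element}: since $m \in M^{G}$ is not maximal, there is some $m' \in M^{G}$ (in fact in $\Gamma$) with $m < m'$; then $w(z-y) > m'$ implies $w(z-y) > m$, hence $w(z-y) \ge m$, so $U_{m'}(y) \subseteq \widetilde{U}_{m}(y) \subseteq \widetilde{U}_{m}(x)$, and of course $y \in U_{m'}(y)$ by Remark \ref{x in umx}. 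Thus $\widetilde{U}_{m}(x)$ is open.

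For closedness, let $y \notin \widetilde{U}_{m}(x)$; then by Lemma \ref{U_my subset of U_mx} we have $\widetilde{U}_{m}(y) \subseteq (\widetilde{U}_{m}(x))^{c}$. Since $U_{m}(y) \subseteq \widetilde{U}_{m}(y)$ and $y \in U_{m}(y)$, the basic open set $U_{m}(y)$ is a neighbourhood of $y$ contained in the complement of $\widetilde{U}_{m}(x)$. Hence $(\widetilde{U}_{m}(x))^{c}$ is open, i.e. $\widetilde{U}_{m}(x)$ is closed.

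There is no real obstacle here; the only subtlety is that $\widetilde{U}_{m}(x)$ is defined with a weak inequality and so is not literally a member of the base $B$, which is why the openness direction needs the extra step of passing to a strictly larger index $m' \in M^{G}$ — and that step is exactly what Remark \ref{MG does not have a maximal element} provides. Everything else is a direct application of the two inclusions already recorded in Lemma \ref{U_my subset of U_mx}.
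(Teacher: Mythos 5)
Your proof is correct and follows essentially the same route as the paper: both halves rest on the two inclusions of Lemma \ref{U_my subset of U_mx} together with exhibiting a basic open neighbourhood of $y$ inside $\widetilde{U}_{m}(y)$. The only difference is that in the openness half the paper simply uses $U_{m}(y) \subseteq \widetilde{U}_{m}(y)$ (strict inequality implies weak), so your appeal to Remark \ref{MG does not have a maximal element} and the passage to $m' > m$ is harmless but unnecessary --- exactly the shortcut you already took in your closedness half.
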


\begin{proof} Let $y \in \widetilde{U}_{m}(x)$; then by Lemma \ref{U_my subset of U_mx},$$y \in
U_{m}(y) \subseteq \widetilde{U}_{m}(y) \subseteq \widetilde{U}_{m}(x).$$ Let $y \notin \widetilde{U}_{m}(x)$; then by Lemma \ref{U_my subset of U_mx},
$$y \in U_{m}(y) \subseteq \widetilde{U}_{m}(y) \subseteq (\widetilde{U}_{m}(x))^{c}.$$

\end{proof}




\begin{lem} \label{formula} Let $x \in E$ and let $m,m' \in
M^{G}$ such that $m<m'$. Then
$$U_{m}(x)=\bigcup_{y \in U_{m}(x)} \widetilde{U}_{m'}(y).$$
\end{lem}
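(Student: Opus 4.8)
The plan is to prove the set equality by establishing the two inclusions separately. For the inclusion $\bigcup_{y \in U_{m}(x)} \widetilde{U}_{m'}(y) \subseteq U_{m}(x)$, I would take an arbitrary element $z$ in the union, so $z \in \widetilde{U}_{m'}(y)$ for some $y \in U_{m}(x)$; this means $w(z-y) \geq m'$ and $w(y-x) > m$. Then $w(z-x) \geq \min\{w(z-y), w(y-x)\} \geq \min\{m', \text{(something)} > m\}$, and since $m' > m$ this minimum is strictly greater than $m$, so $z \in U_{m}(x)$. This direction is an immediate application of (B3) together with the hypothesis $m < m'$.

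For the reverse inclusion $U_{m}(x) \subseteq \bigcup_{y \in U_{m}(x)} \widetilde{U}_{m'}(y)$, I would take $z \in U_{m}(x)$ and simply observe that $z$ itself is an admissible index for the union: indeed $z \in U_{m}(x)$, and by the remark following the definition of $\widetilde{U}_{m'}$ (the analogue of Remark \ref{x in umx}) we have $z \in \widetilde{U}_{m'}(z)$. Hence $z$ belongs to the term of the union indexed by $y = z$, so $z$ lies in the union. This direction is essentially trivial and does not even use $m < m'$.

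Assembling the two inclusions yields the claimed equality. I do not anticipate any real obstacle here: the only point requiring a moment's care is the first inclusion, where one must note that $\min\{w(z-y), w(y-x)\}$ is bounded below by $\min\{m', m^{+}\}$ where $m^{+}$ denotes a value strictly exceeding $m$, and that this forces strict inequality over $m$ precisely because $m' > m$ — so both arguments of the minimum exceed $m$. Everything else is bookkeeping with the definitions of $U_{m}$ and $\widetilde{U}_{m'}$ and the ultrametric-type inequality (B3).
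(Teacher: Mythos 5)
Your proof is correct and follows essentially the same route as the paper: the inclusion $U_{m}(x)\subseteq\bigcup_{y}\widetilde{U}_{m'}(y)$ comes from each point indexing itself, and the reverse inclusion uses (B3) with $w(z-y)\geq m'>m$ and $w(y-x)>m$ to get $w(z-x)>m$. No gaps.
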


\begin{proof} $(\subseteq)$ holds because every $y  \in U_{m}(x)$ is
obviously in $\widetilde{U}_{m'}(y)$. To prove $(\supseteq)$, we
need to show that $\widetilde{U}_{m'}(y) \subseteq U_{m}(x)$ for
all $y \in U_{m}(x)$. So, let $y \in U_{m}(x)$ and let $z \in \widetilde{U}_{m'}(y)$.
Then, $w(y-x)>m$ and $w(z-y) \geq m'$. Thus, since $m<m'$,
$$w(z-x) \geq min \{ w(z-y),w(y-x)\}>m.$$

\end{proof}

We denote $B_{1}=\{ \widetilde{U}_{m}(x) \mid x\in
E, \ m \in M^{G}\}$.

\begin{prop} The set $B_{1}$ is a base for $T_{w}$.
\end{prop}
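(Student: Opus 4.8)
The plan is to show that every basic open set $U_m(x) \in B$ can be written as a union of members of $B_1$, and conversely that every $\widetilde{U}_m(x) \in B_1$ is open in $T_w$ and can be written as a union of members of $B$. Together these two facts show $B_1$ and $B$ generate the same topology, which is exactly the assertion that $B_1$ is a base for $T_w$.

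For the first direction, the work is essentially already done: Lemma \ref{formula} exhibits each $U_m(x)$ as $\bigcup_{y \in U_m(x)} \widetilde{U}_{m'}(y)$ for any $m' > m$ in $M^{G}$, and such an $m'$ exists by Remark \ref{MG does not have a maximal element}. Hence every member of $B$ — and therefore every $T_w$-open set — is a union of members of $B_1$.

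For the converse direction, I would fix $\widetilde{U}_m(x)$ and a point $y \in \widetilde{U}_m(x)$; I must produce some $U_{m''}(y') \in B$ with $y \in U_{m''}(y') \subseteq \widetilde{U}_m(x)$. Since $w(y-x) \geq m$, take $m'' = m$ (or any $m'' \in M^{G}$ with $m'' \geq m$, which exists and we may as well take $m'' = m$): then $y \in U_m(y)$ trivially, and for $z \in U_m(y)$ we get $w(z-x) \geq \min\{w(z-y), w(y-x)\} > m \geq m$, wait — more carefully, $w(z-y) > m$ and $w(y-x) \geq m$ force $w(z-x) \geq \min\{w(z-y),w(y-x)\} \geq m$, so $z \in \widetilde{U}_m(x)$. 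Thus $y \in U_m(y) \subseteq \widetilde{U}_m(x)$, so $\widetilde{U}_m(x)$ is a union of members of $B$ and in particular is $T_w$-open. (This reproves Corollary \ref{widetildeU_mx is clopen} along the way, and indeed one can just cite it for openness.)

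Neither direction presents a genuine obstacle — the only point requiring a moment's care is making sure the index $m'$ in Lemma \ref{formula} can be chosen inside $M^{G}$, which is precisely what Remark \ref{MG does not have a maximal element} guarantees, so the argument stays within the legitimate index set for the base. I would then conclude: since each member of $B$ is a union of members of $B_1$ and each member of $B_1$ is a union of members of $B$, the two collections are bases for the same topology $T_w$, proving that $B_1$ is a base for $T_w$.
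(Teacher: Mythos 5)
Your argument is correct and follows essentially the same route as the paper: openness of each $\widetilde{U}_{m}(x)$ (which you re-derive but could equally cite from Corollary \ref{widetildeU_mx is clopen}), together with Lemma \ref{formula} and Remark \ref{MG does not have a maximal element} to write each $U_{m}(x)$ as a union of members of $B_{1}$. No issues.
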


\begin{proof} First, by Corollary \ref{widetildeU_mx is clopen}, $\widetilde{U}_{m}(x)$ is open
for all $x \in E$ and $ m \in M^{G}$. Now, let $x \in E$ and $m \in M^{G}$. By Remark \ref{MG does not have a maximal element}
there exists $m<m' \in M^{G}$. By Lemma \ref{formula} we have, $U_{m}(x)=\bigcup_{y \in U_{m}(x)} \widetilde{U}_{m'}(y).$ Thus, every open set in $T_{w}$ is a union of elements of $B_{1}$.
\end{proof}


In fact, we can describe the topology in terms of $\Gamma$ (the
value group of the valuation) as the following proposition shows.

First, we denote $B_{2}=\{ \widetilde{U}_{\alpha}(x) \mid x\in
E, \ \alpha \in \Gamma\}$.

\begin{prop} \label{B_2 is a base for thetopology} The set $B_{2}$ is a base for $T_{w}$. 
\end{prop}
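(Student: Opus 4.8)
The plan is to deduce this from the proposition just proved, that $B_{1}=\{\widetilde{U}_{m}(x) \mid x\in E,\ m \in M^{G}\}$ is a base for $T_{w}$; the one extra ingredient needed is that $\Gamma$ is cofinal in $M^{G}$, which is precisely Remark \ref{MG does not have a maximal element}.

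First I would check that every member of $B_{2}$ is open. By Remark \ref{MG is a submonoid and contains Gamma} we have $\Gamma \subseteq M^{G}$, so each set $\widetilde{U}_{\alpha}(x)$ with $\alpha \in \Gamma$ is of the form $\widetilde{U}_{m}(x)$ with $m \in M^{G}$, hence open (indeed clopen) by Corollary \ref{widetildeU_mx is clopen}. In particular $B_{2}\subseteq B_{1}$ and all members of $B_{2}$ are open.

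Next I would show that every basic open set $U_{m}(x)$ (with $x\in E$ and $m \in M^{G}$) is a union of members of $B_{2}$. Given such $x$ and $m$, apply Remark \ref{MG does not have a maximal element} to obtain $\alpha \in \Gamma$ with $m<\alpha$; since $\alpha \in \Gamma \subseteq M^{G}$, the hypotheses of Lemma \ref{formula} are satisfied with $m':=\alpha$, and that lemma yields
$$U_{m}(x)=\bigcup_{y \in U_{m}(x)} \widetilde{U}_{\alpha}(y),$$
which exhibits $U_{m}(x)$ as a union of members of $B_{2}$. Since the sets $U_{m}(x)$ form a base for $T_{w}$, every open set of $T_{w}$ is a union of such $U_{m}(x)$'s, hence a union of members of $B_{2}$; together with the first step this proves $B_{2}$ is a base for $T_{w}$.

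I do not anticipate a real obstacle: the argument runs exactly as the proof that $B_{1}$ is a base, the only change being that the auxiliary index $m'$ is now chosen inside $\Gamma$ rather than merely inside $M^{G}$. The single point that must be verified is that such an $\alpha$ can be taken in $M^{G}$ so that Lemma \ref{formula} applies, and this is immediate from $\Gamma \subseteq M^{G}$.
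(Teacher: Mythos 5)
Your proof is correct and follows the paper's argument exactly: both note $\Gamma\subseteq M^{G}$ so $B_{2}\subseteq B_{1}$ consists of open sets, then use Remark \ref{MG does not have a maximal element} to pick $\alpha\in\Gamma\cap M^{G}$ with $m<\alpha$ and apply Lemma \ref{formula} with $m'=\alpha$ to write each $U_{m}(x)$ as a union of members of $B_{2}$.
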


\begin{proof} First, by Remark \ref{MG is a submonoid and contains Gamma}, $B_{2} \subseteq B_{1}$.
So every element of $B_{2}$ is open in $T_{w}$. Now, let $x \in E$ and $m \in M^{G}$.
By Remark \ref{MG does not have a maximal element}, there exists
 $\alpha \in \Gamma \cap  M^G$ such that $m < \alpha$. By Lemma \ref{formula},
$$U_{m}(x)=\bigcup_{y \in U_{m}(x)} \widetilde{U}_{\alpha}(y).$$

\end{proof}

Recall from [Sa, Section 10] that for every ring $R \subseteq E$
satisfying $R \cap F=O_{v}$, we denote $$\mathcal W_{R}=\{ w \mid~
w \text{\ is a quasi-valuation on\ } E \text{\ extending\ } v
\text{\ with\ } O_{w}=R\}.$$


Also recall that the class $\mathcal W_{R}$ is not empty, by [Sa, Theorem 9.35].


\begin{lem} \label{wx geq va iff xa-1>0} Let $w \in \mathcal W_{R}$, $ x \in E$ and $0 \neq a \in F$. The following are equivalent:

(a) $w(x) \geq v(a);$

(b) $w(x)-v(a) \geq 0;$

(c) $w(xa^{-1}) \geq 0;$

(d) $xa^{-1} \in R.$

\end{lem}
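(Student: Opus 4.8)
The plan is to prove the chain of equivalences cyclically: (a) $\Rightarrow$ (b) $\Rightarrow$ (c) $\Rightarrow$ (d) $\Rightarrow$ (a), which keeps each individual step to a single short manipulation. The only non-formal input is the fact, recalled in the preamble of this section, that every element of $F$ is stable with respect to $w$; this is what lets us pass between $w(xa^{-1})$ and $w(x)-v(a)$, and I expect that to be the one place where care is needed, since without stability the additivity $w(xa^{-1}) = w(x) + w(a^{-1})$ would fail.

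First I would note that (a) $\Leftrightarrow$ (b) is immediate: since $v(a) \in \Gamma$ lies in the group $M$, the inequality $w(x) \geq v(a)$ can be rewritten by adding $-v(a)$ to both sides, giving $w(x) - v(a) \geq 0$, and conversely. Next, for (b) $\Leftrightarrow$ (c), I would invoke stability of $a^{-1} \in F$ (equivalently of $a$): since $a^{-1}$ is stable with respect to $w$, we have $w(xa^{-1}) = w(x) + w(a^{-1})$; and because $w|_F = v$ is a valuation, $w(a^{-1}) = v(a^{-1}) = -v(a)$. Hence $w(xa^{-1}) = w(x) - v(a)$, so $w(xa^{-1}) \geq 0$ holds if and only if $w(x) - v(a) \geq 0$. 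Finally, (c) $\Leftrightarrow$ (d) is just the definition of the quasi-valuation ring $O_w = R$: an element $y \in E$ satisfies $w(y) \geq 0$ precisely when $y \in O_w = R$, applied here to $y = xa^{-1}$.

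Since each link of the chain is in fact an equivalence, no genuine cycle is even required, and the proof is essentially bookkeeping. The one substantive point to highlight in the write-up is the use of stability of elements of $F$: I would state explicitly that we are using $a^{-1} \in F$ is stable (from the recalled [Sa, Lemma 1.6]) together with $w|_F = v$, so that $w(xa^{-1}) = w(x) + v(a^{-1}) = w(x) - v(a)$. Everything else follows formally from the group structure of $M$ and the definition of $R = O_w$.
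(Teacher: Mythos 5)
Your proof is correct and follows essentially the same route as the paper: (a)$\Leftrightarrow$(b) via invertibility of $v(a)$, (b)$\Leftrightarrow$(c) via stability of $a^{-1}\in F$ and $w|_F=v$, and (c)$\Leftrightarrow$(d) via $O_w=R$. One tiny caveat: $M$ is only a totally ordered abelian monoid, not a group, but since $v(a)\in\Gamma\subseteq M$ the inverse $-v(a)$ exists, which is exactly what your (a)$\Leftrightarrow$(b) step needs (and is precisely the paper's point ``because $v(a)\in\Gamma$'').
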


\begin{proof} (a)$\Leftrightarrow$(b). Because $v(a) \in \Gamma$. (b)$\Leftrightarrow$(c). $v$ is a valuation and $0 \neq a \in F$; thus $-v(a)=v(a^{-1})$. Therefore, $w(x) - v(a)=w(x) + v(a^{-1})$. Since $w$ extends $v$ and $a$ is stable with respect to $w$, we get
$$w(x) + v(a^{-1})=w(x) + w(a^{-1})=w(xa^{-1}).$$ (c)$\Leftrightarrow$(d). By assumption, $w \in \mathcal W_{R}$; thus $O_{w}=R$.
\end{proof}

\begin{lem} \label{w_1x iff w_2x} Let $w_{1},w_{2} \in \mathcal W_{R}$ and let
$\alpha \in \Gamma$; then $$w_{1}(x)\geq \alpha \text{\ iff\
}w_{2}(x) \geq \alpha.$$
\end{lem}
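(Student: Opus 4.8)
The plan is to reduce the statement for an arbitrary value $\alpha \in \Gamma$ to the already-established criterion of Lemma \ref{wx geq va iff xa-1>0}, exploiting the fact that both quasi-valuations have the \emph{same} quasi-valuation ring $R$. The key observation is that $\Gamma$ is the value group of $v$, so $v$ maps $F \setminus \{0\}$ onto $\Gamma$ (this is the same fact already used, e.g., in the proof of Remark \ref{MG is a submonoid and contains Gamma}). Hence I would begin by choosing $0 \neq a \in F$ with $v(a) = \alpha$.

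Next, I would apply Lemma \ref{wx geq va iff xa-1>0} to $w_1 \in \mathcal W_R$ with this $a$: the equivalence (a)$\Leftrightarrow$(d) there gives $w_1(x) \geq v(a)$ iff $xa^{-1} \in R$, i.e. $w_1(x) \geq \alpha$ iff $xa^{-1} \in R$. Then I would apply the very same lemma to $w_2 \in \mathcal W_R$ with the same $a$, obtaining $w_2(x) \geq \alpha$ iff $xa^{-1} \in R$. Chaining these two equivalences through the common condition $xa^{-1} \in R$ yields $w_1(x) \geq \alpha$ iff $w_2(x) \geq \alpha$, as desired.

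I do not anticipate a genuine obstacle here: the content has been front-loaded into Lemma \ref{wx geq va iff xa-1>0}, and the only thing to be careful about is that Lemma \ref{wx geq va iff xa-1>0} is stated for every $x \in E$ (including $x = 0$, which is harmless since $w_i(0) = \infty$ and $0 \cdot a^{-1} = 0 \in R$), so no separate case analysis is needed. The proof is therefore essentially a two-line invocation of the preceding lemma applied to $w_1$ and $w_2$ in turn.
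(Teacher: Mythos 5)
Your proposal is correct and matches the paper's own proof: both pick $0 \neq a \in F$ with $v(a)=\alpha$ and apply Lemma \ref{wx geq va iff xa-1>0} twice, chaining the two equivalences through the common condition $xa^{-1} \in R = O_{w_1} = O_{w_2}$. No gaps.
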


\begin{proof} By assumption, $O_{w_1}=O_{w_2}=R$. Let $a \in F$ such that $v(a)=\alpha$; clearly,
$a \neq 0$ (since $\alpha \in \Gamma$). Thus, using Lemma \ref{wx geq va iff xa-1>0} twice, we get
$$w_{1}(x)\geq \alpha \text{\ iff\ } xa^{-1} \in O_{w_1}=O_{w_2} \text{\ iff\ } w_{2}(x)\geq \alpha.$$

\end{proof}

We are ready to prove the main theorem of this section.

\begin{thm} If $w_{1},w_{2} \in \mathcal W_{R}$, then
$T_{w_{1}}=T_{w_{2}}$. In other words, the quasi-valuation ring
determines the topology, independent of the choice of its
quasi-valuation.

\end{thm}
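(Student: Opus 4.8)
The plan is to show that the two topologies have a common base, using the fact (Proposition \ref{B_2 is a base for thetopology}) that $B_2=\{\widetilde{U}_\alpha(x)\mid x\in E,\ \alpha\in\Gamma\}$ is a base for $T_w$ for any quasi-valuation $w\in\mathcal W_R$. The key observation is that the set $\widetilde{U}_\alpha(x)$, although defined via $w$, depends only on $\alpha\in\Gamma$ and the ring $R$, not on the particular quasi-valuation in $\mathcal W_R$. Concretely, for $w_1,w_2\in\mathcal W_R$, Lemma \ref{w_1x iff w_2x} says that for any $\alpha\in\Gamma$ and any $z\in E$ we have $w_1(z)\geq\alpha$ iff $w_2(z)\geq\alpha$; applying this with $z=y-x$ gives $\widetilde{U}^{w_1}_\alpha(x)=\widetilde{U}^{w_2}_\alpha(x)$ for every $x\in E$ and every $\alpha\in\Gamma$.

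First I would fix $w_1,w_2\in\mathcal W_R$ and, to keep the dependence on the quasi-valuation visible, write $\widetilde{U}^{w_i}_\alpha(x)=\{y\in E\mid w_i(y-x)\geq\alpha\}$ for $i=1,2$. Then I would invoke Lemma \ref{w_1x iff w_2x} with the element $y-x$ in place of $x$ to conclude that $\widetilde{U}^{w_1}_\alpha(x)=\widetilde{U}^{w_2}_\alpha(x)$ for all $x\in E$ and all $\alpha\in\Gamma$. Hence the base $B_2$ computed from $w_1$ is literally the same collection of subsets of $E$ as the base $B_2$ computed from $w_2$. By Proposition \ref{B_2 is a base for thetopology}, each of these collections is a base for the respective topology $T_{w_1}$ and $T_{w_2}$; since a topology is determined by any of its bases, $T_{w_1}=T_{w_2}$.

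There is essentially no obstacle here once Lemma \ref{w_1x iff w_2x} and Proposition \ref{B_2 is a base for thetopology} are in hand — the whole content of the theorem has been pushed into those two preparatory results, and what remains is the bookkeeping remark that $\widetilde{U}_\alpha(x)$ is expressed purely in terms of the condition ``$w(y-x)\geq\alpha$'' for $\alpha\in\Gamma$, which by the lemma is ring-theoretic (equivalently, $(y-x)a^{-1}\in R$ for $a\in F$ with $v(a)=\alpha$). The only point requiring a word of care is making sure the indexing set of the base, $\Gamma$, does not itself depend on the choice of $w_i$: it does not, since all quasi-valuations in $\mathcal W_R$ restrict to the same valuation $v$ on $F$ and hence share the same value group $\Gamma\subseteq M$. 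With that noted, the two bases coincide set-theoretically and the conclusion is immediate.
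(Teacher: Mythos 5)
Your argument is correct and is essentially identical to the paper's own proof: both invoke Proposition \ref{B_2 is a base for thetopology} to obtain the bases $\{\widetilde{U}_{\alpha}^{w_i}(x)\mid x\in E,\ \alpha\in\Gamma\}$ and then use Lemma \ref{w_1x iff w_2x} (applied to $y-x$) to see these bases coincide as collections of sets, whence $T_{w_1}=T_{w_2}$. Your additional remark that $\Gamma$ itself is independent of the choice of $w_i$ is a nice point of care, though the paper leaves it implicit.
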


\begin{proof} By Proposition \ref{B_2 is a base for thetopology}, the set $C=\{ \widetilde{U}_{\alpha}^{w_{1}}(x) \mid x\in E, \ \alpha \in \Gamma\}$ is a base for $T_{w_{1}}$ and the set $D=\{ \widetilde{U}_{\alpha}^{w_{2}}(x) \mid x\in E, \ \alpha \in \Gamma\}$ is a base for $T_{w_{2}}$. However, for every $x\in
E$ and $ \alpha \in \Gamma$ we have, by Lemma \ref{w_1x iff w_2x}, $\widetilde{U}_{\alpha}^{w_{1}}(x)=\widetilde{U}_{\alpha}^{w_{2}}(x)$. Thus, $C=D$ and the theorem is proved.


\end{proof}


\section{Weak approximation theorem}

In this section we prove a weak version of the approximation
theorem for quasi-valuations. We call it the weak approximation
theorem since it relies on the independence of the valuation rings
in $F$ and not on the independence of the quasi-valuations in $E$.
(The independence of the valuation rings in $F$ implies the
independence of the quasi-valuation rings in $E$ but not vice
versa).

In this section $F$ denotes a field and
$E$ denotes a finite dimensional field extension with $[E:F]=n$.

\begin{defn} Let $A$ and $B$ be two subrings of
$F$. $A$ and $B$ are called independent if $AB=F$. Two valuations
are called independent, if their rings are independent; likewise,
two valuations are called dependent, if their rings are dependent.
\end{defn}

We recall the Approximation Theorem for valuations.

\begin{thm} (Approximation Theorem for valuations)
([Bo, Section 7.2, Thm. 1]) Let $\{ v_{i}\}_{1 \leq i \leq k}$ be
a set of valuations on a field $F$ which are independent in pairs
and let $\Gamma_{i}$ be the value group of $v_{i}$. Let $x_{i} \in
F$ and $\alpha_{i} \in \Gamma_{i}$ for $1 \leq i \leq k$. Then
there exists an $x \in F$ such that $v_{i}(x-x_{i}) \geq \alpha
_{i}$ for all $i$.
\end{thm}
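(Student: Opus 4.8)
The plan is to reduce, via the classical ``approximate idempotent'' device, to a separation lemma, and to prove that lemma together with the theorem by induction on $k$, pairwise independence being the engine of the induction. First I would discard every trivial $v_i$ (then $\Gamma_i=\{0\}$, $\alpha_i=0$, and $v_i(x-x_i)\ge0$ holds automatically), so assume all $v_i$ nontrivial. For each $i$ choose $\gamma_i\in\Gamma_i$ exceeding $0$ and each of the finitely many elements $\alpha_i-v_i(x_l)$ with $x_l\ne0$. It then suffices to produce, for each $i$, an element $e_i\in F$ with $v_i(1-e_i)\ge\gamma_i$ and $v_j(e_i)\ge\gamma_j$ for all $j\ne i$: for then, using $x-x_i=x_i(e_i-1)+\sum_{l\ne i}x_le_l$, (A3), and the choice of $\gamma_i$, the element $x:=\sum_l x_le_l$ satisfies $v_i(x-x_i)\ge\gamma_i+\min_l v_i(x_l)\ge\alpha_i$ (and $x=0$ works if all $x_l=0$). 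In turn I would take $e_i:=(1+z_i)^{-1}$ for a $z_i\in F$ with $v_i(z_i)\ge\gamma_i$ and $v_j(z_i)\le-\gamma_j$ for all $j\ne i$: since $\gamma_i,\gamma_j>0$ the two summands of $1+z_i$ have distinct $v_i$- and $v_j$-values, so $v_i(1+z_i)=0$ and $v_j(1+z_i)=v_j(z_i)$, which gives $v_i(1-e_i)=v_i(z_i)\ge\gamma_i$ and $v_j(e_i)=-v_j(z_i)\ge\gamma_j$.

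So the problem reduces to the separation lemma: \emph{for pairwise independent nontrivial valuations $v_1,\dots,v_k$ on $F$ and prescribed $\gamma_j\in\Gamma_j$, there is $z\in F$ with $v_1(z)\ge\gamma_1$ and $v_j(z)\le-\gamma_j$ for $j=2,\dots,k$} (and, by relabelling, with any chosen distinguished index). I would prove this by induction on $k$, the case $k=1$ being trivial. The input driving the induction is that $O_{v_1}$ and the intersection ring $B:=\bigcap_{j\ne1}O_{v_j}$ are independent, i.e.\ $O_{v_1}B=F$: for $k=2$ this is the hypothesis, whereas for $k\ge3$ the subring $O_{v_1}B$ of $F$ contains the valuation ring $O_{v_1}$, hence is itself a valuation ring and so a valuation overring of $B$; by standard properties of intersections of finitely many pairwise independent valuation rings ($B$ is a semilocal Pr\"ufer domain with maximal ideals the $\mathfrak m_{v_j}\cap B$) every valuation overring of $B$ is either $F$ or contains some $O_{v_j}$, and in the latter case it contains $O_{v_1}O_{v_j}=F$. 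From $O_{v_1}B=F$ one deduces that $v_1(B\setminus\{0\})$ has no lower bound in $\Gamma_1$: otherwise every element of $O_{v_1}B$ would have $v_1$-value at least a fixed $\delta$, forcing $O_{v_1}B\subseteq\{x\in F:v_1(x)\ge\delta\}\ne F$. The inductive step then runs as follows: choose $d_0\in B$ with $v_1(d_0)\le-\gamma_1$ and set $z_0:=d_0^{-1}$, so $v_1(z_0)\ge\gamma_1$ and $v_j(z_0)=-v_j(d_0)\le0$ for $j\ge2$; by the approximation theorem for the $k-1$ valuations $v_2,\dots,v_k$ (the induction hypothesis) choose $c'$ with $v_j(c')\ge\gamma_j$ for $j\ge2$; if $v_1(c')>0$, replace $c'$ by $c'e$ with $e\in B$ and $v_1(e)\le-v_1(c')$, obtaining $c$ with $v_1(c)\le0$ and $v_j(c)\ge\gamma_j$ for $j\ge2$; finally put $z:=z_0c^{-1}$, for which $v_1(z)=-v_1(d_0)-v_1(c)\ge\gamma_1$ and $v_j(z)=-v_j(d_0)-v_j(c)\le-\gamma_j$.

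The main obstacle is the structural fact invoked above — equivalently, the independence of $O_{v_1}$ and $\bigcap_{j\ne1}O_{v_j}$ — since this is exactly where pairwise independence, rather than mere incomparability of the $O_{v_i}$, is used. One must also be careful that the value groups need not be archimedean, so a $v_i$-value cannot be pushed past a prescribed target by raising a fixed element to a high power; accordingly every requirement above is a one-sided inequality, with the needed extreme values extracted directly from the unbounded sets $v_1(B\setminus\{0\})$. The remaining work — the idempotent bookkeeping and the estimates from (A3) — is routine.
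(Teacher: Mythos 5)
The paper gives no proof of this statement at all: it is quoted as the classical Approximation Theorem with a citation to [Bo, Section 7.2, Thm.\ 1], so there is no internal argument to compare yours with. What you have written is essentially a reconstruction of that classical Bourbaki/Zariski--Samuel proof: the reduction via the approximate idempotents $e_i=(1+z_i)^{-1}$ to a separation lemma, and the induction on $k$ driven by the independence of $O_{v_1}$ and $B=\bigcap_{j\ne 1}O_{v_j}$, are exactly the standard devices. Your estimates all check out, and the structural fact you invoke (that $B$ is semilocal with maximal ideals $\mathfrak m_{v_j}\cap B$ and localizations $O_{v_j}$, whence every valuation overring of $B$ is $F$ or contains some $O_{v_j}$) is precisely the content of [Bo, Section 7.1], which is established there before and independently of the approximation theorem, so quoting it creates no circularity; note also that pairwise independence of nontrivial valuations does give the pairwise incomparability that this structure theorem needs. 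One small patch is required in your inductive step: the element $c'$ supplied by the induction hypothesis is later inverted, but with all targets $x_j=0$ the approximation theorem could legitimately return $c'=0$. Fix this by applying the induction hypothesis to the data $x_2=u$, $x_j=0$ for $j\ge 3$, where $0\ne u\in F$ satisfies $v_2(u)\ge\gamma_2$ and $\alpha_2\in\Gamma_2$ is chosen larger than $v_2(u)$: then $v_2(c'-u)>v_2(u)$ forces $v_2(c')=v_2(u)\ge\gamma_2$, so in particular $c'\ne 0$, while $v_j(c')\ge\gamma_j$ for $j\ge 3$; after this the rest of your construction of $z=z_0c^{-1}$ goes through unchanged.
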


Let $\{ O_{v_{i}} \}_{1 \leq i \leq k}$ be a finite set
of valuation rings of $F$. We denote by $B$ their intersection,
i.e.,
$$B= \bigcap _{1 \leq i \leq k} O_{v_{i}}.$$ Let $\{ R_{i} \}_{1
\leq i \leq k}$ be a finite set of subrings of $E$ such that $E$
is the field of fractions of each $R_{i}$ and $R_{i} \cap F=
O_{v_{i}}$ for every $1 \leq i \leq k$. Recall that by [Sa, Theorem
9.35], for every $1 \leq i \leq k$ there exists a filter
quasi-valuation $w_{i}$ on $E$ corresponding to $R_{i}$ and such
that $w_{i}$ extends $v_{i}$ (so the collections of quasi-valuations
corresponding to the $R_{i}$'s are not empty.) We shall prove our
theorem for every quasi-valuation $w_{i}$ on $E$ corresponding to
$R_{i}$ (not necessarily filter quasi-valuations). Note that for
every $1 \leq i \leq k$, $R_{i}F$ is an integral domain finite
dimensional over $F$ and thus a field containing $R_{i}$; hence
$R_{i}F=E$. Moreover, by [Bo, Section 7, Proposition 1], the field
of fractions of $B$ is $F$.






We denote $R= \bigcap _{1 \leq i \leq k} R_{i}$.





The next observation is well known.

 \begin{rem} Let $C$ be an integral domain, $S$ a
multiplicative closed subset of $C$ with $0 \notin S$, and $R$ an
algebra over $C$. We claim that every $x \in R \otimes_{C}CS^{-1}$
is of the form $r \otimes \frac{1}{\beta}$ for $r \in R$ and
$\beta \in S$. Indeed, write $x=\sum_{i=1}^{t}(r_{i} \otimes
\frac{\alpha_{i}}{\beta_{i}})$ where $r_{i} \in R$, $\alpha_{i}
\in C$ and $\beta_{i} \in S$. Let $\beta=\Pi_{i=1}^{t}\beta_{i}$
and $\alpha_{i}'=\alpha_{i}\beta \beta_{i}^{-1} \in C$. Thus,
$$\sum_{i=1}^{t}(r_{i} \otimes \frac{\alpha_{i}}{\beta_{i}})=
\sum_{i=1}^{t}(r_{i} \otimes \frac{\alpha_{i}'}{\beta})=
\sum_{i=1}^{t} (\alpha_{i}' r_{i} \otimes \frac{1}{\beta})=r
\otimes \frac{1}{\beta}.$$

Where $r=\sum_{i=1}^{t} \alpha_{i}' r_{i}$.\end{rem}

\begin{prop} $E=S^{-1}R \cong R \otimes_{B} F$,
where $S=B \setminus \{ 0\}$.\end{prop}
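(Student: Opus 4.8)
The plan is to prove the two claims separately: first that $S^{-1}R=E$, then that the natural map $R\otimes_B F\to S^{-1}R$ is an isomorphism. For the first claim, recall that $R=\bigcap R_i$ and $S=B\setminus\{0\}$ where $B=\bigcap O_{v_i}$. Since $B\subseteq R$ and $S=B\setminus\{0\}$ consists of nonzero elements of the integral domain $E$ (note $R\subseteq E$ is a domain), the localization $S^{-1}R$ makes sense inside $E$. Clearly $S^{-1}R\subseteq E$. For the reverse inclusion I would take an arbitrary $x\in E$ and use the fact, recorded just before the statement, that the field of fractions of $B$ is $F$ (by [Bo, Section 7, Proposition 1]); combined with $R_iF=E$ for each $i$, this should let me clear denominators into $R$ using a single element of $S$. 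Concretely, I expect that for $x\in E$ one finds $0\ne b\in B$ with $bx\in\bigcap R_i=R$: one first writes $x$ with denominators in each $R_i$, then pulls those denominators down to $F$ and then to $B$ using that $B$ has fraction field $F$ and the $O_{v_i}$ are the $R_i\cap F$; multiplying all these together gives a single $b\in B\setminus\{0\}$ with $bx\in R$, so $x\in S^{-1}R$.

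For the isomorphism $R\otimes_B F\cong S^{-1}R$, the key point is that $F$ is the field of fractions of $B$, i.e. $F=B S^{-1}=S^{-1}B$ with $S=B\setminus\{0\}$. Then by the standard compatibility of tensor product with localization,
$$R\otimes_B F = R\otimes_B (S^{-1}B)\cong S^{-1}(R\otimes_B B)\cong S^{-1}R.$$
The preceding Remark is designed precisely to make this rigorous from the ground up: every element of $R\otimes_B F$ has the form $r\otimes\frac1\beta$ with $r\in R$, $\beta\in S$ (applying the Remark with $C=B$). So I would define $\varphi:R\otimes_B F\to E$ on such elements by $\varphi(r\otimes\frac1\beta)=r\beta^{-1}\in E$, check it is well defined and a ring homomorphism, observe its image is exactly $S^{-1}R$ (which we just showed equals $E$), and then check injectivity: if $r\beta^{-1}=0$ in $E$ then $r=0$ (as $E$ is a domain and $\beta\ne0$), hence $r\otimes\frac1\beta=0\otimes\frac1\beta=0$ in $R\otimes_B F$. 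That gives the isomorphism.

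The main obstacle is the clearing-of-denominators step for $S^{-1}R=E$: one must be careful that a denominator that works simultaneously for all the $R_i$ can be chosen inside $B=\bigcap O_{v_i}$, not merely inside $F$. The way around this is to use that for each $i$, $R_i\cap F=O_{v_i}$ and $R_iF=E$, so any $x\in E$ can be written as $x=r_i/a_i$ with $r_i\in R_i$ and $0\ne a_i\in F$; then since the fraction field of $B$ is $F$, write $a_i=c_i/d_i$ with $c_i,d_i\in B\setminus\{0\}$. Taking $b=\prod_i c_i$ (or a suitable product accounting for all the $d_i$ as well) yields $b\in B\setminus\{0\}=S$ with $b x\in R_i$ for every $i$, hence $bx\in R$. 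A secondary, purely formal point is verifying that $\varphi$ is well defined, which reduces to: if $r_1\otimes\frac1{\beta_1}=r_2\otimes\frac1{\beta_2}$ in $R\otimes_B F$ then $r_1\beta_1^{-1}=r_2\beta_2^{-1}$ in $E$ — this follows because $\beta_2 r_1-\beta_1 r_2$ maps to $0$ under $R\to R\otimes_B F$ composed back, and $B\hookrightarrow R$ with $R$ a domain; I would spell this out via the universal property of the tensor product rather than by hand.
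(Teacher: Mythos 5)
Your proposal is correct and follows essentially the same route as the paper: you clear denominators for each $x\in E$ into a single element of $B\setminus\{0\}$ per index $i$ (the paper packages this step via its Remark applied with $C=B$) and then multiply the $k$ denominators, using $R_iB=R_i$, to land $bx$ in $R=\bigcap R_i$. The only difference is cosmetic: you spell out the standard identification $R\otimes_B F\cong S^{-1}R$ (via the universal property and $F=S^{-1}B$), which the paper treats as implicit, and you bypass the paper's opening remark that $S^{-1}R$ is a field by proving the two inclusions directly.
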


\begin{proof} $S^{-1}R$ is an integral domain finite dimensional
over $F$, so is a field. It remains to show that any $x \in E$ has
the form $r/b$ where $r\in R$ and $b \in S$. By the previous Remark, $x$ can be written in the form $r_{i}/b_{i}$ where $r_{i}\in
R_{i}$ and $b_{i}\in B \setminus \{ 0\}$. Write $b= \prod_{1 \leq i \leq k} b_{i}$
and get
$$bx=r_{i} \prod _{l \neq i}b_{l} \in R_{i}B=R_{i}$$ for every
$1 \leq i \leq k$, and thus $x=bx/b$ has the desired form.\end{proof}

We are ready for the main theorem of this section: the weak
approximation theorem for quasi-valuations.

\begin{thm} Let $E/F$ be a finite field extension and
let $\{ O_{v_{i}} \}_{1 \leq i \leq k}$ be a finite set of
valuation rings of $F$ which are pairwise independent. Let $\{
R_{i} \}_{1 \leq i \leq k}$ be a finite set of subrings of $E$
such that the field of fractions of each $R_{i}$ is $E$ and $R_{i}
\cap F= O_{v_{i}}$ for every $1 \leq i \leq k$. Let $\{ w_{i} \mid w_i:E \rightarrow M_i \cup \{ \infty\}, \ 1 \leq i \leq k \}$
be a set of quasi-valuations on $E$ such that for every $1 \leq i \leq
k$, $w_{i} \in \mathcal W_{R_i}$. Let $\{ x_{i}
\}_{1 \leq i \leq k} \subseteq E$ and let $\{ m_{i} \}_{1 \leq i
\leq k}$ be a set of elements such that, for every $1 \leq i \leq
k$, $m_{i} \in M^{G}_{i}$. Then there exists an element $x \in E$
such that
$$w_{i}(x-x_{i}) \geq m_{i}.$$ for all $1 \leq i \leq k$.
\end{thm}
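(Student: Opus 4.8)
The plan is to reduce the statement to the classical Approximation Theorem for valuations on $F$ and then ``lift'' the approximating element from $F$ to $E$ using an $F$-basis of $E$ together with the structure result $E = S^{-1}R \cong R\otimes_B F$ from Proposition~3.6. First I would fix, for each $i$, an element $a_i \in F$ with $v_i(a_i) = \alpha_i$ for a suitable $\alpha_i \in \Gamma_i \cap M_i^G$ chosen so that $\alpha_i \geq m_i$; this is possible by Remark~2.8 (namely \ref{MG does not have a maximal element}), which guarantees $\alpha_i \in \Gamma_i$ above any given element of $M_i^G$. Note that by Lemma~\ref{wx geq va iff xa-1>0}, for $y \in E$ the condition $w_i(y) \geq v_i(a_i)$ is equivalent to $y a_i^{-1} \in R_i$; so it suffices to produce $x \in E$ with $(x - x_i)a_i^{-1} \in R_i$ for every $i$, and then \emph{a fortiori} $w_i(x-x_i) \geq \alpha_i \geq m_i$.

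Next I would exploit $R_i F = E$: write each $x_i$ in terms of a fixed $F$-basis $e_1,\dots,e_n$ of $E$ chosen inside $R$ (possible since $E = S^{-1}R$ so $E$ is spanned over $F$ by elements of $R$, and one can extract a basis). Thus $x_i = \sum_{j=1}^n c_{ij} e_j$ with $c_{ij} \in F$. The idea is to build $x = \sum_{j=1}^n c_j e_j$ coordinatewise: for each fixed $j$, apply the classical Approximation Theorem (Theorem~3.3) to the pairwise-independent valuations $v_1,\dots,v_k$ on $F$, with targets $c_{ij} \in F$ and with value-group elements $\beta_i \in \Gamma_i$ large enough; this yields $c_j \in F$ with $v_i(c_j - c_{ij})$ as large as we like, simultaneously for all $i$. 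Doing this for $j=1,\dots,n$ produces $x = \sum_j c_j e_j \in E$ with $x - x_i = \sum_j (c_j - c_{ij}) e_j$, a sum of $n$ terms each of the form $(\text{element of } F \text{ with large } v_i\text{-value}) \cdot e_j$.

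The last step is the estimate: since $e_j \in R_i$ (as $e_j \in R \subseteq R_i$), we have $w_i(e_j) \geq 0$, and since elements of $F$ are stable with respect to $w_i$, $w_i\big((c_j - c_{ij}) e_j\big) = v_i(c_j - c_{ij}) + w_i(e_j) \geq v_i(c_j - c_{ij})$; then (B3) gives $w_i(x - x_i) \geq \min_j v_i(c_j - c_{ij})$. So it suffices, in the coordinatewise application of Theorem~3.3 for index $j$, to demand $v_i(c_j - c_{ij}) \geq \alpha_i$ for every $i$ (here $\alpha_i \in \Gamma_i$ is the value group of $v_i$, so this is a legitimate target for the classical theorem). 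Then $w_i(x-x_i) \geq \alpha_i \geq m_i$ for all $i$, as required.

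The main obstacle I anticipate is purely bookkeeping rather than conceptual: one must be careful that the $F$-basis of $E$ can be taken inside $R$ (so that $w_i(e_j) \geq 0$ for \emph{all} $i$ simultaneously), which is exactly what Proposition~3.6 delivers since $E = S^{-1}R$ with $S = B\setminus\{0\} \subseteq F$; and one must check that the targets $\alpha_i$ really lie in the value group $\Gamma_i$ of $v_i$ (not merely in $M_i$), so that the classical Approximation Theorem applies — this is handled by Remark~\ref{MG does not have a maximal element}, which produces $\alpha_i \in \Gamma_i \cap M_i^G$ with $m_i < \alpha_i$. With those two points in place, the argument is a clean reduction to $F$; there is no need to know anything about the interaction of the $w_i$ on $E$ beyond (B2), (B3) and stability of scalars, which is precisely why this is the ``weak'' approximation theorem.
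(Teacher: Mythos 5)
Your proposal is correct and follows essentially the same route as the paper's own proof: choose $\alpha_i \in \Gamma_i \cap M_i^G$ above $m_i$ via Remark \ref{MG does not have a maximal element}, take an $F$-basis of $E$ inside $R=\bigcap_i R_i$ (from $E=S^{-1}R$), approximate the coordinates $c_{ij}$ by the classical theorem, and conclude with stability of scalars, $w_i(r_j)\geq 0$, and (B3). The only differences are cosmetic (your detour through Lemma \ref{wx geq va iff xa-1>0} is unnecessary, and the paper needs only (B2) rather than full stability), so no further changes are needed.
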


\begin{proof} Since $m_{i} \in M^{G}_{i}$ for every $1 \leq i \leq
k$, we get by Remark \ref{MG does not have a maximal element} that there exist $\alpha_{i} \in \Gamma_{i}$ such that $m_{i}
< \alpha_{i}$ for all $1 \leq i \leq
k$. We shall prove that $w_{i}(x-x_{i}) \geq
\alpha_{i}$ for every $1 \leq i \leq k$. Let $R= \bigcap
_{1 \leq i \leq k} R_{i}$; by Proposition 3.4, $R$
contains a basis $\{ r_{1}, r_{2},...,r_{n}\} $ of $E$ over $F$.
Write, for every $1 \leq i \leq k$,
$$x_{i}=\sum_{1 \leq j \leq n} c_{ij}r_{j}$$ where $c_{ij} \in F$.
The approximation theorem for valuations gives $d_{1},...,d_{n}
\in ~F$ such that $$v_{i}(d_{j}-c_{ij}) \geq \alpha_{i}, $$ for $1
\leq i \leq k$, $1 \leq j \leq n$.


Define $x= \sum _{1 \leq j \leq n} d_{j}r_{j}$ and get, for every
$1 \leq i \leq k$,
$$w_{i}(x-x_{i})=w_{i}(\sum _{1 \leq j \leq n} d_{j}r_{j}-\sum_{1
\leq j \leq n} c_{ij}r_{j})$$
$$=w_{i}(\sum _{1
\leq j \leq n} (d_{j}- c_{ij})r_{j}).$$

Note that, for every $1 \leq j \leq n$ and $1 \leq i \leq k$, $$w_{i}(d_{j}-
c_{ij})=v_{i}(d_{j}- c_{ij}) \geq \alpha_{i}$$ and $w_{i}(r_{j})
\geq 0$ (since $r_{j} \in R$). Thus, $$w_{i}(\sum _{1
\leq j \leq n} (d_{j}- c_{ij})r_{j}) \geq \min_{1
\leq j \leq n} w_{i}( (d_{j}- c_{ij})r_{j}) \geq \alpha_i.$$

\end{proof}

Department of Mathematics, Sce College, Ashdod, 77245, Israel.

{\it E-mail address: sarusss1@gmail.com}

\end{document}